\newtheorem{theorem}{Theorem}
\newtheorem{prop}[theorem]{Proposition}
\newtheorem{cor}[theorem]{Corollary}
\newcommand{\bP}{\mathbf{P}}
\newcommand{\NP}{\mathbf{NP}}
\newcommand{\FP}{\mathbf{FP}}
\newcommand{\GI}{\mathbf{GI}}
\newcommand{\cC}{\mathcal{C}}
\newcommand{\cI}{\mathcal{I}}
\newcommand{\cR}{\mathcal{R}}
\DeclareMathOperator\iw{iw}
\DeclareMathOperator\Gal{Gal}
\title{The Complexity of Counting Poset and Permutation Patterns}
\author{Joshua Cooper and Anna Kirkpatrick \\ Mathematics Department, University of South Carolina \\ 1523 Greene St., Columbia SC 29208}
\begin{document}

\maketitle


\begin{abstract}
We introduce a notion of pattern occurrence that generalizes both classical permutation patterns as well as poset containment.  Many questions about pattern statistics and avoidance generalize naturally to this setting, and we focus on functional complexity problems -- particularly those that arise by constraining the order dimensions of the pattern and text posets.  We show that counting the number of induced, injective occurrences among dimension 2 posets is \(\#\bP\)-hard; enumerating the linear extensions that occur in realizers of dimension 2 posets can be done in polynomial time, while for unconstrained dimension it is \(\GI\)-complete; counting not necessarily induced, injective occurrences among dimension 2 posets is \(\#\bP\)-hard; counting injective or not necessarily injective occurrences of an arbitrary pattern in a dimension 1 text is \(\#\bP\)-hard, although it is in \(\FP\) if the pattern poset is constrained to have bounded intrinsic width; and counting injective occurrences of a dimension 1 pattern in an arbitrary text is \(\#\bP\)-hard, while it is in \(\FP\) for bounded dimension texts.  This framework easily leads to a number of open questions, chief among which are (1) is it \(\#\bP\)-hard to count the number of occurrences of a dimension 2 pattern in a dimension 1 text, and (2) is it \(\#\bP\)-hard to count the number of texts which avoid a given pattern?
\end{abstract}

\section{Introduction}

A tremendous amount of study has been dedicated to understanding occurrence or non-occurrence of combinatorial substructures: which substructures are avoidable and counting objects that avoid them, what substructures random and random-like objects possess, enumerating substructures in general objects, describing the subclass of objects that have particular substructure counts, etc.  Much interesting work (particularly in model theory) has concerned the completely general question of substructure occurrence, but the degree of abstraction involved changes the nature of which questions are useful to ask.  However, here we investigate a somewhat more specific perspective that still allows us to address important questions from two disparate but highly studied areas: permutation patterns and subposet containment.  In order to illustrate this unifying viewpoint, we first describe these two topics:

\begin{enumerate}

\item  A ``permutation of \(n\)'' is a bijection from \([n] = \{1,\ldots,n\}\) to itself for some positive integer \(n\).  Given a permutation \(\sigma\), called a ``pattern'', and another permutation \(\tau\) of \(n\), called the ``text'', we say that \(\sigma\) ``occurs on'' or ``matches'' the index set \(\cI \subset [n]\) in \(\tau\) if \(\tau|_\cI\) is order-isomorphic to \(\sigma\); that is, if \(\cI = \{a_1,\ldots,a_k\}\) with \(a_1 < \cdots < a_k\), for any \(i, j \in [k]\), \(\sigma(i) < \sigma(j)\) if and only if \(\tau(a_i) < \tau(a_j)\).  Interesting questions about pattern occurrence include the complexity of counting the number of occurrences of a pattern, the distribution of pattern counts for random permutations, enumeration of permutations which avoid a given pattern, and the structure of permutations with specified pattern counts.  

\item A ``poset of size \(n\)'' is a set (the ``ground set'') of cardinality \(n\) associated with a partial order of the set, that is, a binary relation which is reflexive, antisymmetric, and transitive.  Given posets \(P = (S, \prec_P)\) and \(Q = (T, \prec_Q)\), we say that \(Q\) contains \(P\) (as a ``subposet'') on the set \(U \subset T\) if there is an order-preserving bijection between \(P\) and \(Q|_U\); that is, if there exists a bijection \(f : S \rightarrow U\) so that, for \(x, y \in S\), \(x \prec_P y\) implies \(f(x) \prec_Q f(y)\).  Furthermore, the containment is said to be ``induced'' if the implication is in fact biconditional, and ``unlabelled'' if it is understood only up to order automorphisms of \(P\).  Interesting questions about poset containment include the complexity of counting the number of subposets of a given type, the size of the largest subposet of a given poset not containing a fixed subposet, and the nature of linear extensions of a poset (which correspond in the present language to occurrences of a poset in a chain -- a total order -- of equal size).

\end{enumerate}

To unify these perspectives, we introduce a notion of pattern occurrence that generalizes classical permutation pattern matching as well as poset containment.  In the most general formulation, let \(P\) and \(Q\) be posets which we term the ``pattern'' and ``text'' posets, respectively.  We say that \(P\) ``occurs at'' a subposet \(Q^\prime\) of \(Q\) if there exists an onto function \(f : P \rightarrow Q^\prime\) so that \(f\) is order-preserving, i.e., \(v \prec_P w\) implies \(f(v) \prec_Q f(w)\); in this case, \(f\) is called an ``occurrence'' of \(P\) in \(Q\).  Furthermore, we say that the occurrence is ``induced'' if \(f\) is in fact an order isomorphism, i.e., \(f\) is an occurrence so that \(f(v) \prec_Q f(w)\) implies \(v \prec_P w\), and we say that the occurrence is ``injective'' (``bijective'') if \(f\) is injective (respectively, bijective).  One can also speak of ``unlabeled'' pattern occurrences as equivalence classes of occurrences from a pattern \(P\) to a text \(Q\) modulo automorphisms of \(P\).

We reformat several classical problems in the language of permutation patterns using the notion of order dimension (sometimes called ``Dushnik-Miller dimension'' \cite{DM41}).  Given a poset \(P\), a ``linear extension'' of \(P\) is a bijective occurrence of \(P\) in a chain \(C\) (a totally ordered poset).  We use an equivalent definition from poset theory interchangeably with this: a linear extension is a total order \(\prec^\prime\) on the ground set of \(P\) so that \(v \prec_P w\) implies \(v \prec^\prime w\) for any \(v,w \in P\).  A family \(\cR = \{f_1,\ldots,f_r\}\) of linear extensions of \(P\) is said to be a ``realizer'' of \(P\) if a relation \(v \prec w\) is in \(P\) iff \(f_j(v) \prec_C f_j(w)\) for every \(j \in [r]\); a realizer \(\cR\) is ``minimal'' iff it has the fewest possible number of elements among all realizers of \(P\); the cardinality of a minimal realizer of \(P\) is the ``dimension'' \(\dim(P)\) of \(P\).  The ``width'' of a poset is the size of its largest antichain, i.e., subset of vertices between which there are no relations.  An ``automorphism'' of a poset is a bijective occurrence of a poset in itself.

We also refer to standard texts in computational complexity theory to precisely define hardness of decision and functional complexity problems (e.g., \cite{AB09}).  Roughly, a decision problem is in \(\bP\) if the answer can be obtained in polynomial time (in the size of the input instance); it is in \(\NP\) if the answer can be certified in polynomial time; it is \(\NP\)-hard if every problem in \(\NP\) can be reduced to it in polynomial time (i.e., it is at least as hard as all problems in \(\NP\)); it is \(\NP\)-complete if it is \(\NP\)-hard and in \(\NP\).  Similarly, a function problem (a computational problem whose output is an integer instead of only a single bit) is in \(\FP\) if the answer can be obtained in polynomial time (in the size of the input instance); it is in \(\#\bP\) if it consists of computing the number of correct solutions to a problem in \(\NP\); it is \(\#\bP\)-hard if the problem of computing the number of correct solutions to any problem in \(\NP\) can be reduced to this problem in polynomial time; it is \(\#\bP\)-complete if it is \(\#\bP\)-hard and in \(\#\bP\).

\section{Results}

In the following, we denote by \(P\) a pattern poset and by \(Q\) a text poset.

\begin{theorem} \label{thm:2to2}
If \(\dim(P) = \dim(Q) = 2\), the problem of computing the number of unlabeled, induced, injective occurrences of \(P\) in \(Q\) is \(\#\bP\)-hard.
\end{theorem}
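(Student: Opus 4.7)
The key idea is to exploit the standard correspondence between dim-$2$ posets and permutations: to each $\sigma \in \fS_n$ one associates the poset $P_\sigma$ on ground set $[n]$ with $i \prec j$ iff $i < j$ and $\sigma(i) < \sigma(j)$; every dim-$\le 2$ poset arises this way, and $P_\sigma \cong P_{\sigma^{-1}}$ by swapping the two coordinates of the realizer. The plan is to use this correspondence to push hardness from the classical world of permutation patterns into the dim-$2$ poset world. Specifically, given an instance $(\pi,\tau) \in \fS_k \times \fS_n$ of classical permutation-pattern counting, I would set $P := P_\pi$ and $Q := P_\tau$ and observe that an unlabeled induced injective occurrence of $P$ in $Q$ is precisely a $k$-subset $S \subseteq [n]$ with $P_\tau|_S \cong P_\pi$. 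After standardizing $\tau|_S$ to a permutation $\mathrm{st}(\tau|_S) \in \fS_k$, the restriction $P_\tau|_S$ is isomorphic to $P_{\mathrm{st}(\tau|_S)}$, so the quantity we compute is
\[
\sum_{\sigma \in \fS_k : P_\sigma \cong P_\pi} c(\sigma, \tau),
\]
where $c(\sigma,\tau)$ denotes the classical number of occurrences of the pattern $\sigma$ in the text $\tau$.

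I would then reduce from the $\#\bP$-hard problem of computing $c(\pi,\tau)$; this problem is in $\#\bP$ since witnessing a single occurrence is in $\NP$, and its $\#\bP$-hardness is inherited via the (essentially parsimonious) reduction underlying the Bose--Buss--Lubiw proof of $\NP$-completeness of permutation pattern matching. Because the equivalence class $\{\sigma : P_\sigma \cong P_\pi\}$ always contains $\pi^{-1}$, the cleanest cases are when this class is $\{\pi\}$ (true, for example, when $\pi$ is an involution with a rigid $2$D point-set) or $\{\pi,\pi^{-1}\}$. In the latter case the oracle returns $c(\pi,\tau) + c(\pi^{-1},\tau)$, and a second call with $\tau$ replaced by its reverse disentangles the two summands. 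Ensuring strict equality $\dim(P) = \dim(Q) = 2$ rather than $\le 2$ can be achieved by a harmless one-element padding of each side.

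The main obstacle is controlling the equivalence class $\{\sigma : P_\sigma \cong P_\pi\}$, which in general is larger than $\{\pi, \pi^{-1}\}$ and is not obviously tractable to compute. My primary strategy is thus to argue that the pattern families driving $\#\bP$-hardness of classical pattern counting can be chosen (or minimally modified) so that this class is provably of size at most two -- e.g., by building $\pi$ as a permutation whose $2$D point-set has only the two axis-swap symmetries. If that fine control proves elusive, my fallback is to abandon generic pattern-counting hardness and instead reduce directly from $\#3\mathrm{SAT}$: encode variables, clauses, and consistency constraints as explicit $2$D point-set gadgets so that the resulting pattern $P$ has the desired uniqueness property by construction and the count of its induced injective copies in $Q$ reads off the number of satisfying assignments. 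The $2$D representations then supply dimension-$2$ realizers for both $P$ and $Q$ automatically.
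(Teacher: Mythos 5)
Your overall route is the paper's route: identify dimension-$2$ posets with permutations via $\sigma \mapsto P_\sigma$ and transfer hardness from classical permutation pattern counting, which is exactly what the paper does by combining Proposition~\ref{prop:2to2isjustpermmatching} with \cite{BBL98} (note that Bose--Buss--Lubiw prove $\#\bP$-completeness of \emph{counting} matches outright, so you need not re-derive it from parsimony of their $\NP$-completeness reduction). The genuine difference is your treatment of the correspondence itself. The paper flatly asserts a bijection between matches of $\sigma_P$ in $\sigma_Q$ and unlabeled, induced, injective occurrences of $P$ in $Q$, whereas you observe that the poset oracle really returns $\sum_{\sigma : P_\sigma \cong P_\pi} c(\sigma,\tau)$. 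Your caution is warranted: the paper's proof of Proposition~\ref{prop:2to2isjustpermmatching} verifies that matches yield occurrences and that occurrences with a common image are equivalent modulo automorphisms of $P$, but it never proves surjectivity, and surjectivity fails in general. For instance, $P_{231} \cong P_{312}$ (each is one comparable pair plus an isolated point), yet $231$ has no match in the text $312$, while $P_{231}$ has one unlabeled induced injective occurrence in $P_{312}$. So the ``main obstacle'' you identify is a real issue that the paper's writeup elides, and any complete proof must control the class $\{\sigma : P_\sigma \cong P_\pi\}$ -- for example via indecomposability of $P_\pi$, which forces a unique realizer and hence the class $\{\pi, \pi^{-1}\}$.

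That said, one of your patches is broken. Reversing the text does not disentangle $c(\pi,\tau) + c(\pi^{-1},\tau)$: since $c(\sigma,\tau^r) = c(\sigma^r,\tau)$, the second oracle call returns $c(\pi^r,\tau) + c((\pi^{-1})^r,\tau)$, which is a linear equation in two \emph{new} unknowns rather than a second independent equation in the original two (and $(\pi^r)^{-1} = (\pi^{-1})^c \neq (\pi^{-1})^r$ in general, so no symmetry rescues this). A clean repair within your primary strategy is to take $\pi$ an involution whose poset is indecomposable, so the class is exactly $\{\pi\}$ and the oracle returns $c(\pi,\tau)$ on the nose -- but then you owe a hardness proof for counting occurrences of such special patterns, which \cite{BBL98} does not supply off the shelf. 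This is precisely where your fallback earns its keep: a direct reduction from $\#3$SAT with rigidity built into the gadgets is essentially what the paper itself carries out (for Theorem~\ref{thm:modifiedBBL}), where all matches of the constructed pattern are induced and in bijection with satisfying assignments; adapting such a construction, while still verifying the uniqueness property you flag, is the route that actually closes the gap that the generic-oracle argument -- and, strictly speaking, the paper's own two-line proof -- leaves open.
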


Given a permutation \(\sigma\) of \(n\), there is a poset \(D(\sigma)\) associated with \(\sigma\), whose ground set is \([n]\) and \(i \prec_{D(\sigma)} j\) if and only if \(i < j\) and \(\sigma(i) < \sigma(j)\).  In other words, \(D(\sigma)\) is the two-dimensional poset with realizer comprised of the ordinary total order \(\leq\) on \([n]\) and the pullback \(\sigma^\ast(\leq)\).  We can define an automorphism of \(\sigma\) simply to be an automorphism of \(D(\sigma)\). A not-necessarily-induced match of a permutation pattern \(\sigma\) in a permutation text \(\tau\) is an occurrence of \(D(\sigma)\) in \(D(\tau)\); in the language of permutations, these are maps between the corresponding index sets that preserve coinversions but not necessarily inversions.  (For a permutation \(\sigma \in \mathfrak{S}_n\) and \(i, j \in [n]\), the pair \(\{i,j\}\) is said to be an ``inversion'' if \((i-j)(\sigma(i)-\sigma(j)) < 0\) and a ``coinversion'' if \((i-j)(\sigma(i)-\sigma(j)) > 0\).)


\begin{prop} \label{prop:countingauts} The problem of counting the number of automorphisms of a dimension two poset is in \(\FP\); equivalently, the problem of counting the number of automorphisms of a permutation is in \(\FP\).
\end{prop}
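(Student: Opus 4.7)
The plan is to reduce to a recursion on the substitution decomposition tree of $\sigma$ (equivalently, the modular decomposition of $D(\sigma)$). Every permutation admits a unique such tree whose internal nodes are of three types---direct sum $\oplus$, skew sum $\ominus$, or inflation by a simple permutation of size $\geq 4$---with singleton leaves; in $D(\sigma)$ these correspond to series sums, parallel sums, and substitutions into a prime quotient, respectively. The tree is computable in polynomial (in fact linear) time by standard algorithms, and isomorphism of two sub-permutations can be tested in polynomial time by comparing canonically labeled subtrees. For a dimension-two poset given without a realizer, a realizer can be produced in polynomial time by classical transitive-orientation algorithms, yielding the requisite permutation.

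Next, I would compute $|\text{Aut}(D(\cdot))|$ bottom-up along the tree. A leaf contributes $1$. A $\oplus$-node with children $C_1,\ldots,C_k$ contributes $\prod_i |\text{Aut}(D(C_i))|$, since in the series sum each child occupies a distinct height range and hence cannot be permuted by any automorphism. A $\ominus$-node, whose children form mutually incomparable blocks, allows free permutation of children within isomorphism classes; grouping them into classes of sizes $m_1,\ldots,m_r$ with per-class automorphism counts $a_1,\ldots,a_r$, the count becomes $\prod_j m_j!\, a_j^{m_j}$, because each class-respecting permutation of children extends to $\prod_i |\text{Aut}(D(C_i))|$ automorphisms of the whole $\ominus$-block.

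The remaining case, a simple-inflation node $\sigma = \pi[\alpha_1,\ldots,\alpha_m]$ with $\pi$ simple, is where the main obstacle lies. I would invoke the classical fact that a prime dimension-two poset has its realizer determined uniquely up to swap of its two linear extensions, which forces $|\text{Aut}(D(\pi))| \leq 2$: any poset automorphism either preserves both linear extensions in the realizer (and is therefore the identity on the underlying set) or interchanges them, yielding a unique candidate involution $\tau$ of $[m]$. In polynomial time one then checks whether $\tau$ is actually order-preserving on $D(\pi)$ and whether the inflations are $\tau$-equivariant, i.e., $\alpha_i \cong \alpha_{\tau(i)}$ for every $i$. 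The node contributes $(1+\epsilon)\prod_i |\text{Aut}(D(\alpha_i))|$, where $\epsilon\in\{0,1\}$ records the admissibility of $\tau$. The hard part is justifying the bound $|\text{Aut}(D(\pi))| \leq 2$ for simple $\pi$; once it is in hand, composing contributions up the tree produces $|\text{Aut}(D(\sigma))|$ in polynomial time.
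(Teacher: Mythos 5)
Your proposal is correct and is essentially the paper's own argument: both recurse on the Gallai/substitution decomposition, invoke the classical fact (Gallai \cite{G67}; Bayoumi--El-Zahar--Khamis \cite{BEK94}) that a prime two-dimensional poset has a unique realizer up to swapping its two linear extensions and hence at most two automorphisms, and multiply the contributions of series, parallel, and prime nodes up the tree in polynomial time. If anything, your bookkeeping is more careful than the paper's displayed formula \(2^t \prod_{P_0} |\Gal(P_0)|!\): your parallel-node factor \(\prod_j m_j!\, a_j^{m_j}\) correctly restricts reorderings to isomorphism classes of children, and your prime-node check that the candidate involution \(\tau\) is equivariant on the inflating blocks (\(\alpha_i \cong \alpha_{\tau(i)}\)) makes explicit a compatibility condition the paper's count glosses over.
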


Note the constrast with poset automorphism counting in general.  Indeed, poset automorphism counting is at least as hard as bipartite poset automorphism counting, which is easy to see is polynomial-time equivalent to bipartite graph automorphism counting; bipartite graph isomorphism counting is known to be as hard as general graph isomorphism counting by, for example, \cite{KTZ82}.  By \cite{M79}, this is polynomial-time reducible to the graph isomorphism decision problem, and is therefore so-called ``\(\GI\)-complete'', a complexity class widely believed to be strictly between \(\bP\) and \(\NP\)-hard.

\begin{theorem} \label{thm:lab_ind_inj_2-to-2}
If \(\dim(P) = \dim(Q) = 2\), the problem of computing the number of labeled, induced, injective occurrences of \(P\) in \(Q\) is \(\#\bP\)-hard.
\end{theorem}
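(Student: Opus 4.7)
The plan is to reduce the unlabeled counting problem (already shown \(\#\bP\)-hard in Theorem \ref{thm:2to2}) to the labeled counting problem by a simple orbit-counting identity combined with Proposition \ref{prop:countingauts}.

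First, I would set up the action: the automorphism group \(\mathrm{Aut}(P)\) acts on the set of labeled induced injective occurrences of \(P\) in \(Q\) by precomposition, sending \(f : P \to Q'\) to \(f \circ \phi\). Two labeled occurrences represent the same unlabeled occurrence precisely when they lie in the same orbit. Because we restrict to \emph{injective} \(f\), the action is free: \(f \circ \phi = f\) with \(f\) injective forces \(\phi = \id\). Hence every orbit has cardinality exactly \(|\mathrm{Aut}(P)|\), yielding the identity
\[
\#\{\text{labeled induced injective occurrences of }P\text{ in }Q\} \;=\; |\mathrm{Aut}(P)| \cdot \#\{\text{unlabeled induced injective occurrences of }P\text{ in }Q\}.
\]

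Second, I would invoke Proposition \ref{prop:countingauts}: since \(\dim(P) = 2\), the quantity \(|\mathrm{Aut}(P)|\) can be computed in polynomial time in the size of \(P\). In particular, \(|\mathrm{Aut}(P)| \geq 1\), so it is safe to divide by it. This gives a polynomial-time Turing reduction from the unlabeled to the labeled counting problem (on inputs with \(\dim(P) = \dim(Q) = 2\)): given \((P,Q)\), compute \(|\mathrm{Aut}(P)|\) via Proposition \ref{prop:countingauts}, query the labeled-counting oracle on \((P,Q)\), and divide. Composing with Theorem \ref{thm:2to2} then establishes \(\#\bP\)-hardness of the labeled problem.

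There is no real obstacle here beyond checking the freeness of the \(\mathrm{Aut}(P)\)-action, which is immediate from injectivity, and confirming that the reduction preserves the hypothesis \(\dim(P) = \dim(Q) = 2\) (it does, since \((P,Q)\) is passed through unchanged). The entire argument is essentially a one-line orbit count leveraging the nontrivial input — namely that dimension-2 automorphism counting is tractable — supplied by Proposition \ref{prop:countingauts}.
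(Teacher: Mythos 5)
Your proposal is correct and takes essentially the same route as the paper: its one-line proof likewise combines Proposition \ref{prop:countingauts} with Theorem \ref{thm:2to2}, relying on exactly the fact you make explicit, namely that labeled induced injective occurrences fall into free \(\mathrm{Aut}(P)\)-orbits, so the labeled count is \(|\mathrm{Aut}(P)|\) times the unlabeled count. If anything, your write-up is more precise about the direction of the reduction (querying the labeled oracle and dividing by the polynomial-time-computable \(|\mathrm{Aut}(P)|\) to recover the unlabeled count), where the paper's phrasing states the reducibility in the reverse direction.
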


\begin{cor}\label{cor:losethedimension}
For any pattern \(P\) and text \(Q\), the problem of computing the number of (labeled or unlabeled) induced, injective occurrences of \(P\) in \(Q\) is \(\#\bP\)-hard.
\end{cor}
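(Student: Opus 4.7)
The plan is that this corollary follows immediately from Theorems \ref{thm:2to2} and \ref{thm:lab_ind_inj_2-to-2} by a trivial reduction: the general problem subsumes the special case in which both the pattern and text posets are of dimension 2.

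More precisely, for the unlabeled version I would argue as follows. Let $\cA$ be any algorithm that, given arbitrary posets $P$ and $Q$ presented (say, by their ground sets and covering relations), computes the number of unlabeled induced injective occurrences of $P$ in $Q$. Then in particular $\cA$ computes this number whenever $\dim(P) = \dim(Q) = 2$. The identity map on inputs is a polynomial-time (indeed, logspace) many-one reduction from the dimension-2 problem shown $\#\bP$-hard in Theorem \ref{thm:2to2} to the unrestricted problem; hence the unrestricted problem is $\#\bP$-hard as well. The labeled case is identical, invoking Theorem \ref{thm:lab_ind_inj_2-to-2} instead.

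The only point that needs mention is that the input encoding matches: Theorems \ref{thm:2to2} and \ref{thm:lab_ind_inj_2-to-2} present the dimension-2 instances by their realizers (equivalently, by pairs of permutations), but from a realizer one recovers the poset relation in polynomial time, so this is a legitimate polynomial-time reduction. There is no genuine obstacle; the entire content of the corollary is the observation that dropping constraints on the order dimension of the inputs can only make the counting problem harder.
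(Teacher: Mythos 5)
Your proposal is correct and matches the paper's proof exactly: the paper also derives the corollary immediately from Theorems \ref{thm:2to2} and \ref{thm:lab_ind_inj_2-to-2} on the grounds that the unconstrained problem is more general. Your added remark about the input encoding (realizers versus explicit order relations) is a reasonable bit of diligence the paper leaves implicit, but it does not change the argument.
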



\begin{theorem} \label{thm:modifiedBBL}
If \(\dim(P) = \dim(Q) = 2\), the problem of computing the number of (labeled or unlabeled) not necessarily induced, injective occurrences of \(P\) in \(Q\) is \(\#\bP\)-hard.
\end{theorem}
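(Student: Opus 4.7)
The plan is to prove \(\#\bP\)-hardness by a parsimonious reduction from \(\#3\)-SAT, modifying the permutation pattern-matching reduction of Bose, Buss, and Lubiw (hence the label). Given a 3-CNF formula \(\phi\) with variables \(x_1,\dots,x_n\) and clauses \(C_1,\dots,C_m\), I would construct in polynomial time a pattern permutation \(\sigma\) and text permutation \(\tau\) (equivalently, dimension-\(2\) posets \(P=D(\sigma)\) and \(Q=D(\tau)\)) so that the number of not-necessarily-induced injective occurrences of \(P\) in \(Q\) equals \(c(\phi)\cdot N(\phi)\), where \(N(\phi)\) is the number of satisfying assignments of \(\phi\) and \(c(\phi)\) is an explicit polynomial-time computable positive integer.

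The construction would follow a block design. Each variable \(x_i\) contributes a variable block to both \(\sigma\) and \(\tau\): the \(\tau\)-block contains two mutually exclusive ``slots'' representing the assignments \(x_i=\top\) and \(x_i=\bot\), and the corresponding \(\sigma\)-block is a piece that can be non-inducedly embedded into exactly one of the two slots, thereby encoding a binary choice. Each clause \(C_j\) contributes a clause block whose \(\sigma\)-piece admits a non-induced embedding into its \(\tau\)-piece if and only if at least one literal of \(C_j\) is satisfied by the choices made in the variable blocks. The blocks are then concatenated and interleaved with a global increasing ``scaffolding'' of sufficiently many, suitably extremal points in both \(\sigma\) and \(\tau\), whose coinversions force any injective match to align \(\sigma\)'s scaffolding with \(\tau\)'s scaffolding in essentially a unique way, and thus to place each variable (resp.\ clause) block of \(\sigma\) inside the corresponding block of \(\tau\).

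With the construction in place, the bijection argument has two directions. In the easy direction, each satisfying assignment \(\alpha\) yields exactly \(c(\phi)\) valid matches (one per routine choice of orientations within the clause blocks). In the hard direction, one proves a ``locality lemma'': the scaffolding coinversions pin down the placement of \(\sigma\)'s variable and clause blocks into the corresponding regions of \(\tau\), so that any non-induced injective occurrence decomposes into independent within-block choices, which are in bijection with satisfying assignments by construction. Summing shows the total count equals \(c(\phi)\,N(\phi)\); since \(c(\phi)\) is computable in polynomial time, an oracle for counting non-induced injective occurrences of dimension-\(2\) patterns in dimension-\(2\) texts recovers \(N(\phi)\), establishing \(\#\bP\)-hardness.

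The main obstacle is precisely the weakening from ``induced'' to ``non-induced'': in the induced settings of Theorems~\ref{thm:2to2} and~\ref{thm:lab_ind_inj_2-to-2} one can rule out spurious placements by exploiting incomparabilities in \(P\), but here an incomparable pair in \(P\) is allowed to map to a comparable pair in \(Q\), so the gadgets cannot police themselves via missing relations. To compensate, I would make the pattern \emph{relation-dense}: whenever two elements of \(\sigma\) must be kept out of the wrong \(\tau\)-region, an explicit coinversion (i.e.\ a \(\prec_P\) relation) is installed between them, so that the required alignment is forced by the preservation of positive pattern relations rather than by the absence of spurious ones. Verifying that the resulting relation-dense pattern still has dimension \(2\) -- by exhibiting a pair of linear extensions realizing it -- is the most delicate step, and would be handled by designing each gadget as a small permutation whose two natural coordinate orders realize the entire block, then checking that concatenation and interleaving preserve this realizer structure.
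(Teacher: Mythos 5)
Your architecture is in fact the paper's: a reduction from \(\#3\)-SAT via a BBL-style permutation construction with variable blocks offering two mutually exclusive slots, clause blocks, and an increasing scaffolding whose forced alignment localizes any match block-by-block (the paper's Claim 1 does exactly your ``locality lemma,'' via the observation that consecutive scaffolding elements must map to text positions at least as far apart). But there is one genuine gap in the counting step. You assert that each satisfying assignment contributes exactly \(c(\phi)\) matches, ``one per routine choice of orientations within the clause blocks,'' with \(c(\phi)\) depending only on \(\phi\). As you specify the clause gadget -- embeddable iff at least one literal is satisfied -- the number of embeddings of a clause block will in general depend on \emph{which and how many} literals are true under the assignment, so the total count has the form \(\sum_{\alpha} \prod_j c_j(\alpha)\), a weighted sum over satisfying assignments \(\alpha\) from which \(N(\phi)\) cannot be recovered by dividing out a single polynomial-time computable constant. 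This uniformity is precisely what the paper engineers rather than assumes: its clause text block \(\tau^C_i\) contains seven bracketed slots \(q_i(0),\ldots,q_i(6)\), one for each of the seven satisfying truth patterns of the clause's three literals, and the scaffolding together with the variable-block choices \(r_1,\ldots,r_n\) forces the clause pattern \(u_{i1}\cdot u_{i2}\cdot u_{i3}\) into exactly the one slot \(q_i(s_i)\) whose binary digits record the literal values (the paper's Claim 2: the \(s_i\)'s are determined by the \(r_i\)'s). The result is multiplicity exactly one, i.e.\ a parsimonious bijection with \(c(\phi)=1\); building that forced, assignment-independent multiplicity into the gadgets is the crux of the proof, and your proposal leaves it unconstructed.

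A secondary point: what you call the most delicate step -- verifying that the relation-dense pattern still has dimension \(2\) -- is actually automatic and needs no checking. If you define both gadgets as sequences of distinct reals, i.e.\ as permutations \(\sigma\) and \(\tau\), then the associated posets \(D(\sigma)\) and \(D(\tau)\) have dimension at most \(2\) by construction, realized by the position order and the value order; this is exactly how the paper proceeds. Your instinct to force alignment through coinversions (positive relations preserved by any occurrence) rather than through absent relations is the right response to the non-induced weakening; indeed, in the paper's construction the forced matches turn out to be induced anyway, which is why the same reduction doubles as an alternative proof of the Bose--Buss--Lubiw \(\#\bP\)-hardness result.
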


\begin{cor} \label{cor:NPhard} Deciding whether a given dimension \(2\) poset has a not necessarily induced, injective, unlabeled match in another dimension \(2\) poset is \(\NP\)-complete.
\end{cor}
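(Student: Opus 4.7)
The plan is to establish both containment in \(\NP\) and \(\NP\)-hardness separately; note that the \(\#\bP\)-hardness of Theorem \ref{thm:modifiedBBL} does not on its own yield the decision hardness, since a \(\#\bP\)-hardness reduction need not be parsimonious. For \(\NP\) membership, I would take as a certificate an alleged injective, order-preserving function \(f : P \to Q\); verifying such a witness is a straightforward \(O(|P|^2)\) check of injectivity together with preservation of each relation \(v \prec_P w\).

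For \(\NP\)-hardness, the natural route is to invoke the theorem of Bose, Buss, and Lubiw---the source alluded to in the naming of Theorem \ref{thm:modifiedBBL}---which asserts that classical permutation pattern matching is \(\NP\)-complete: given permutations \(\sigma\) and \(\tau\), does \(\sigma\) occur as a not-necessarily-induced pattern in \(\tau\)? The reduction is immediate from the correspondence \(\sigma \mapsto D(\sigma)\) recalled just before Proposition \ref{prop:countingauts}: given an instance \((\sigma,\tau)\), output the pair \((D(\sigma), D(\tau))\), each of which has dimension two by construction. As already noted in the paper, permutation matches \(\sigma \hookrightarrow \tau\) correspond exactly to not-necessarily-induced, injective, \emph{labeled} occurrences of \(D(\sigma)\) in \(D(\tau)\).

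Passing from the labeled to the unlabeled decision problem is harmless: an unlabeled occurrence is an equivalence class of labeled occurrences under the automorphism action of \(P\), and such an equivalence partitions a nonempty set into nonempty classes. Hence a labeled occurrence exists if and only if an unlabeled one does, so \(\NP\)-hardness transfers verbatim, and combined with membership we obtain \(\NP\)-completeness.

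The main obstacle I anticipate is ensuring the reduction really produces instances inside the claimed class---namely, that every dimension-two poset can be realized as some \(D(\sigma)\). This is standard: given a realizer \(\{\ell_1,\ell_2\}\) of a dimension-two poset \(P\) on \(n\) elements, one relabels the ground set so that \(\ell_1 = \id\) on \([n]\), whereupon \(\ell_2\) determines a unique \(\sigma \in \fS_n\) with \(P \cong D(\sigma)\). With this observation the map \((\sigma,\tau)\mapsto (D(\sigma),D(\tau))\) is polynomial-time computable, stays within dimension-two instances, and preserves the decision question, so the corollary follows.
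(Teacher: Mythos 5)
Your \(\NP\)-membership argument and the labeled/unlabeled observation are both fine, but the hardness half has a genuine gap: you invoke the wrong theorem. The Bose--Buss--Lubiw result \cite{BBL98} establishes \(\NP\)-completeness of \emph{classical} permutation pattern matching, which is the \emph{induced} notion: \(\sigma\) occurs in \(\tau\) iff some subsequence of \(\tau\) is order-isomorphic to \(\sigma\), i.e., both inversions and coinversions are preserved. Under the correspondence \(\sigma \mapsto D(\sigma)\), that problem is equivalent to \emph{induced} occurrence of \(D(\sigma)\) in \(D(\tau)\) (this is Proposition \ref{prop:2to2isjustpermmatching}), not to the not necessarily induced occurrence that Corollary \ref{cor:NPhard} concerns; your description of the BBL problem as the ``not-necessarily-induced'' variant conflates the two. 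Consequently your reduction \((\sigma,\tau) \mapsto (D(\sigma),D(\tau))\) does not preserve the answer. Concretely, take \(\sigma = 21\) and \(\tau = 12\): then \(\sigma\) has no classical occurrence in \(\tau\), yet \(D(\sigma)\) is a two-element antichain, which maps injectively and order-preservingly (vacuously) into the two-element chain \(D(\tau)\), so the target instance is a YES instance of the not necessarily induced problem while the source is a NO instance of the BBL problem. The \(\NP\)-hardness of the not necessarily induced variant is therefore not quotable from \cite{BBL98}; it is exactly what has to be proved.

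The paper closes this gap by using the \emph{proof}, not merely the statement, of Theorem \ref{thm:modifiedBBL}: the construction there reduces 3-SAT to not necessarily induced matching in such a way that matches of \(\pi\) in \(\tau\) are in bijection with satisfying assignments of \(\Sigma\). In particular the reduction is parsimonious --- which answers your (correct in general) caveat that \(\#\bP\)-hardness alone does not yield decision hardness --- so a not necessarily induced match exists iff \(\Sigma\) is satisfiable, and \(\NP\)-hardness follows immediately; combined with your certificate-checking argument this yields \(\NP\)-completeness. (As the paper notes, all matches arising in that construction happen to be induced, so the same construction simultaneously certifies hardness for both variants; that is the feature your appeal to \cite{BBL98} was implicitly assuming but which must come from the modified construction.) To repair your write-up, replace the citation of \cite{BBL98} with an appeal to the reduction in the proof of Theorem \ref{thm:modifiedBBL}, or reproduce a direct reduction from 3-SAT; the rest of your argument, including the dimension-two realizability of every \(D(\sigma)\) and the labeled-to-unlabeled transfer, can stand as written.
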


\begin{theorem} \label{thm:BWisjustdimQis1}
If \(\dim(Q) = 1\), the problem of counting the number of (injective or not necessarily injective) occurrences of an arbitrary \(P\) in \(Q\) is \(\#\bP\)-hard.
\end{theorem}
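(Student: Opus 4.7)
The plan is to reduce from the $\#\bP$-hard problem of counting linear extensions of a finite poset (Brightwell--Winkler). Let the input poset $P$ have size $n$ and write $e(P)$ for the number of its linear extensions.

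For the injective case, I would set the text to be the chain $C_n$ on $n$ elements. Since $|P|=|C_n|$, any injective, order-preserving $f : P \to C_n$ is automatically a bijection, and by the paper's definition such a bijection is exactly a linear extension of $P$. So a single oracle call on the instance $(P, C_n)$ returns $e(P)$, giving a polynomial-time Turing reduction.

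For the not-necessarily-injective case, let $N(P, C_m)$ denote the number of order-preserving maps $P \to C_m$, which by the paper's definition is exactly the count of occurrences of $P$ in $C_m$. Stratifying by the image yields
\[ N(P, C_m) = \sum_{k=0}^{n} s_k(P)\, \binom{m}{k}, \]
where $s_k(P)$ is the number of surjective order-preserving maps $P \to C_k$: choose a $k$-element image in $C_m$, then a surjection onto it. A surjective order-preserving map from $P$ to a chain of size $n = |P|$ is forced to be a bijection, hence a linear extension, so $s_n(P) = e(P)$. Standard binomial inversion then gives
\[ e(P) = \sum_{j=0}^{n} (-1)^{n-j} \binom{n}{j} N(P, C_j), \]
so querying the oracle at the $n+1$ instances $(P, C_0), \ldots, (P, C_n)$---each of size polynomial in $|P|$---and forming this alternating sum recovers $e(P)$ in polynomial time.

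There is no serious obstacle here; once one recognizes that order-preserving maps to a chain $C_m$ are counted by the strict order polynomial of $P$, a degree-$n$ polynomial in $m$ whose diagonal coefficient $s_n$ equals $e(P)$, the reduction essentially writes itself. The points to double-check are: (i) that the paper's definition of ``occurrence'' (onto its image, with $\prec_P$-related pairs sent to $\prec_Q$-related pairs) coincides with the order-polynomial convention when $Q$ is a chain---it does, since $\prec_{C_m}$ is strict---and (ii) that the chains $C_j$ of size $j \le n$ are admissible inputs and of polynomial encoding size in $|P|$.
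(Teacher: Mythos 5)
Your proof is correct, and on the non-injective half it takes a genuinely different---and in fact sounder---route than the paper. For the injective case you and the paper do essentially the same thing: the paper observes that the number of injective occurrences in a chain \(Q\) is \(e(P)\binom{|Q|}{|P|}\) and invokes Brightwell--Winkler, while you simply specialize to \(|Q| = |P|\), where the binomial is \(1\) and a single oracle call returns \(e(P)\). For the non-injective case, however, the paper asserts in one line that the count is \(e(P)\binom{|Q|+|P|-1}{|P|}\), and this product formula is false in general: for \(P\) a two-element antichain, the number of order-preserving maps into a chain \(C_m\) is \(m^2\), whereas the formula gives \(2\binom{m+1}{2} = m^2 + m\). (The formula effectively counts pairs consisting of a linear extension and a multiset, which overcounts maps; by the theory of \(P\)-partitions the true count is a sum of binomial coefficients indexed by linear extensions with varying descent statistics, not a clean product.) Your stratification \(N(P,C_m) = \sum_k s_k(P)\binom{m}{k}\) followed by binomial inversion over the \(n+1\) texts \(C_0,\dots,C_n\) sidesteps this entirely, extracting \(s_n(P) = e(P)\) via a polynomial-time Turing reduction; it is also robust to whether order-preservation into a chain is read strictly or weakly, since both the stratification and the identity \(s_n(P)=e(P)\) hold under either convention (a surjection onto \(C_n\) with \(n = |P|\) is a bijection, and the equality case is vacuous for bijections). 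The price is \(n+1\) oracle calls instead of the paper's one, which is immaterial for \(\#\bP\)-hardness; what it buys is an argument that actually establishes the non-injective half, where the paper's proof as written has an error.
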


This essentially a restatement of Brightwell and Winkler's famous result that counting the number of linear extensions of a poset is \(\#\bP\)-hard.  By contrast, some special cases of this problem are in fact easy.  Before proceeding, we define the ``(Gallai) modular decomposition''\footnote{Unfortunately, there are quite a few names in the literature given to modules in addition to ``modules'': ``autonomous sets'', ``intervals'', ``homogeneous sets'', ``partitive sets'', and ``clans'', for example.} of a poset.  Given \(P = (S,\prec)\), define a subset \(T \subset S\) to be a ``module'' of \(P\) if, for all \(u,v \in T\) and \(x \in S \setminus T\), \(u \prec x\) iff \(v \prec x\) and \(x \prec u\) iff \(x \prec v\).  A module \(T\) is ``strong'' if, for any module \(U \subset S\), \(U \cap T \neq \emptyset\)
implies \(U \subset T\) or \(T \subset U\).  Thus, the nonempty strong modules of \(P\) form a tree order, called the ``(Gallai) modular decomposition'' of \(P\).  A strong module or poset is said to be ``indecomposable'' if its only proper submodules are singletons and the empty set.  It is a result of Gallai (\cite{G67}) that the maximal proper strong modules of \(P\) are a partition \(\Gal(P)\) of \(T\), and it is straightforward to see that the quotient poset \(P / \Gal(P)\) is well-defined.  Furthermore, Gallai showed the following.  The comparability graph \(G(P)\) of a poset \(P\) has as its vertex set the ground set of \(P\) and has an edge \(\{x,y\}\) for \(x \neq y\) if \(x \prec_P y\) or \(y \prec_P x\).

\begin{theorem}[Gallai \cite{G67}] Given a poset \(P\) such that \(|P| \geq 2\), one of the following holds.
\begin{enumerate}
\item (Parallel-Type) If \(G(P)\) is not connected, then \(\Gal(P)\) is the family of subposets induced by the connected components of \(G(P)\) and \(P/\Gal(P)\) is an empty poset.
\item (Series-Type) If the complement \(\overline{G(P)}\) of \(G(P)\) is not connected, then \(\Gal(P)\) is the family of subposets induced by the connected components of \(\overline{G(P)}\) and \(P/Gal(P)\) is a linear order.
\item (Indecomposable) Otherwise, \(|Gal(P)| \geq 4\) and \(P/Gal(P)\) is indecomposable.
\end{enumerate}
\end{theorem}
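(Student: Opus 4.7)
The plan is to first set up the algebra of modules and then dispatch the three cases in sequence. I would begin by proving the standard module calculus: if $M$ and $N$ are modules of $P$ with $M \cap N \neq \emptyset$, then $M \cup N$, $M \cap N$, and $M \setminus N$ (when the latter is nonempty and $N \not\subset M$) are all modules of $P$. Each is a routine case-split on where the outside ``test point'' lies, using that membership or non-membership in $M \cap N$, $M \setminus N$, $N \setminus M$, and the exterior determines the relation to both $u$ and $v$. From this the strong modules of $P$ form a laminar family, so for every element $v$ there is a well-defined largest proper strong module containing $v$ (the top of the chain of proper strong modules through $v$, with singletons being strong and proper when $|P| \geq 2$); these modules therefore partition $P$ into $\Gal(P)$, and $P/\Gal(P)$ is well-defined.

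For the parallel case, the connected components of $G(P)$ are immediately modules since no comparability of $P$ crosses components, and the module calculus then promotes each to a maximal proper strong module. For the series case the crucial step is that a connected component $C$ of $\overline{G(P)}$ is a module of $P$: given $u, v \in C$ joined by an $\overline{G(P)}$-path $u = w_0, \ldots, w_k = v$ and an external $x \notin C$, each $w_i$ is $G(P)$-adjacent to $x$, and the $P$-incomparability of consecutive $w_i, w_{i+1}$ together with transitivity rules out the mixed chains $w_i \prec_P x \prec_P w_{i+1}$ and $w_{i+1} \prec_P x \prec_P w_i$, so the direction of the relation to $x$ propagates along the path. Any two components of $\overline{G(P)}$ are fully comparable to one another, making the quotient a chain.

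For the indecomposable case both $G(P)$ and $\overline{G(P)}$ are connected. Small-case elimination (no 3-vertex graph has both itself and its complement connected) gives $|P| \geq 4$. To verify $|\Gal(P)| \geq 4$ and that $P/\Gal(P)$ is indecomposable, I would first show that every module $N$ of $P$ is either contained in a single block of $\Gal(P)$ or is a union of blocks, by playing $N$ against the strongness of each block. Then any module $\bar N$ of $P/\Gal(P)$ pulls back to a genuine module $\tilde N = \bigcup_{B \in \bar N} B$ of $P$. If $\bar M$ is a supposed nontrivial proper module of the quotient, I would argue that $\tilde M$ is actually a \emph{strong} module of $P$ (using the module calculus to close up any non-strongness witnessed by overlapping modules, which themselves must be unions of blocks by the above dichotomy), whence $\tilde M$ would be a proper strong module strictly containing a block of $\Gal(P)$, contradicting the maximality of that block. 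A similar lifting rules out $|\Gal(P)| \in \{2,3\}$ by forcing $P$ back into parallel or series type and contradicting the connectedness of both $G(P)$ and $\overline{G(P)}$.

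The main obstacle I anticipate is exactly this last step: turning a putative non-strong module of the quotient into a strong module of $P$, which requires iteratively applying the module calculus to overlapping quotient-level modules until strongness is attained. Once that piece of infrastructure is in place, the three-way classification essentially falls out of the graph-theoretic trichotomy ``$G(P)$ disconnected / $\overline{G(P)}$ disconnected / both connected.''
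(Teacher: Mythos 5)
The paper itself gives no proof of this statement---it is quoted from Gallai \cite{G67} as an external result---so your proposal can only be measured against the classical argument, which it largely tracks. The module calculus, laminarity of strong modules (hence the well-definedness of $\Gal(P)$ and the quotient), the path-propagation argument showing each component of $\overline{G(P)}$ is a module, the block-or-union-of-blocks dichotomy for modules, the pullback of quotient modules to modules of $P$, and the lifting argument ruling out $|\Gal(P)| \in \{2,3\}$ (every poset on $2$ or $3$ elements has disconnected comparability or incomparability graph, and disconnection at the quotient level lifts to $P$ because blocks are modules) are all correct and standard. You do gloss the verification that the components (resp.\ co-components) are not merely modules but \emph{maximal proper strong} modules in the parallel (resp.\ series) case, but the tool you need---any module meeting two components must be a union of components, by the same propagation argument---is already implicit in your setup, so that part is routine.

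The genuine gap sits exactly where you flagged it, and the mechanism you propose does not close it. You want to make the pullback $\tilde M$ of a putative nontrivial quotient module strong by ``iteratively applying the module calculus to overlapping quotient-level modules until strongness is attained.'' But the union of two overlapping proper modules can be the \emph{entire} quotient, in which case the closure terminates at the improper module $P/\Gal(P)$ itself and produces no proper strong module strictly containing a block, hence no contradiction with maximality. Tellingly, your sketch of this step never invokes the case-3 hypothesis that both $G(P)$ and $\overline{G(P)}$ are connected---yet any correct proof must use it there, since the conclusion is false without it. The missing lemma is: if proper modules $\bar M$ and $\bar N$ overlap and $\bar M \cup \bar N$ is the whole ground set, then $\bar M \setminus \bar N$, $\bar M \cap \bar N$, and $\bar N \setminus \bar M$ are three modules partitioning everything, with pairwise uniform relations; since $\bar M \setminus \bar N$ lies outside the module $\bar N$, it relates to $\bar M \cap \bar N$ and to $\bar N \setminus \bar M$ the same way (and symmetrically for $\bar N \setminus \bar M$ against $\bar M$), and a short check of the resulting patterns shows that either all three cross-relations are comparabilities or one part is incomparable to the other two---so the comparability graph or its complement is disconnected, which lifts from the quotient to $P$ exactly as in your size argument, contradicting the hypothesis. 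With this lemma in hand, every union formed during your closure stays proper, the closure reaches a maximal proper module, that module is strong (any overlap would again union to everything), and it strictly contains a block of $\Gal(P)$, contradicting maximality. Equivalently, the lemma shows the maximal proper modules are pairwise disjoint, hence strong, hence are precisely the blocks of $\Gal(P)$, and indecomposability of the quotient follows. Supply this lemma and your outline becomes a complete proof.
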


Define the ``intrinsic width'' \(\iw(P)\) of a poset as the maximum width of its indecomposable modules.  (So, for example, series-parallel posets are characterized by having intrinsic width \(1\).)  The following strengthens a result of Steiner (\cite{S90}), who provides a similar, albeit incomplete, proof of a slightly weaker result.

\begin{theorem} \label{thm:betterthanSteiner}
If the intrinsic width of a poset is bounded, its number of linear extensions (i.e., bijective occurrences as a pattern in a dimension \(1\) text poset) can be computed in polynomial time.  In particular, given a chain \(Q\), if \(\iw(P) \leq k\), there is an algorithm that computes in \(O(n^{\max(3,k)})\) time the number of occurrences of \(P\) in \(Q\).
\end{theorem}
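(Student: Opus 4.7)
I would compute $e(P)$ recursively along the Gallai modular decomposition tree of $P$, with a different combining step at each internal node depending on whether its quotient $P/\Gal(P)$ is of parallel, series, or indecomposable type. The decomposition itself can be obtained in polynomial time by standard algorithms, so the substantive work is the combining step at each node.

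\textbf{Combining at each node.} Suppose the top-level decomposition is $\{M_1, \ldots, M_r\}$ with $|M_i| = n_i$ and quotient $Q$, and assume $e(M_i)$ has been computed inductively for each $i$. In the parallel case ($Q$ empty), distinct module-linear-extensions interleave freely, giving $e(P) = \binom{n}{n_1,\ldots,n_r}\prod_i e(M_i)$. In the series case ($Q$ a chain), they concatenate in the induced order, giving $e(P) = \prod_i e(M_i)$. In the indecomposable case, $w(Q) \leq \iw(P) \leq k$, and every linear extension of $P$ decomposes uniquely as (i) a word $w \in [r]^n$ containing $n_i$ copies of $i$, with all copies of $i$ preceding all copies of $j$ whenever $M_i \prec_Q M_j$, together with (ii) a linear extension of each $M_i$; hence $e(P) = |W_Q|\prod_i e(M_i)$, where $|W_Q|$ counts valid words. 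The module axioms ensure that any cross-module relation $x \prec_P y$ arises from some $M_i \prec_Q M_j$, which justifies the $Q$-constraint on $w$.

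\textbf{Counting $|W_Q|$.} I would compute $|W_Q|$ by dynamic programming on progress vectors $(c_1,\ldots,c_r)$ with $0 \le c_i \le n_i$, subject to the validity rule that $c_j > 0$ forces $c_i = n_i$ whenever $M_i \prec_Q M_j$. Let $f(c_1,\ldots,c_r)$ count valid prefixes reaching that vector; then $f(0,\ldots,0) = 1$, transitions $f(c) = \sum_{i:\, c_i > 0,\, (c - e_i) \text{ valid}} f(c - e_i)$, and $|W_Q| = f(n_1,\ldots,n_r)$. A reachable state factors as a completed order ideal $D = \{i : c_i = n_i\}$ of $Q$ together with an active antichain $A = \{i : 0 < c_i < n_i\} \subseteq \min(Q \setminus D)$; since $w(Q) \le k$ we have $|A| \le k$, and a width-$\le k$ poset of size $r$ has $O(r^k)$ order ideals, making the DP state space polynomial.

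\textbf{Main obstacle.} The delicate point is extracting the sharp exponent $\max(3,k)$ advertised in the statement. A direct count yields $O(r^k \cdot n^k)$ states at an indecomposable node, hence roughly $O(n^{2k})$ work locally, by multiplying the order-ideal count by the $\prod_{i \in A} n_i$ choices for each active antichain. Tightening this to $O(n^k)$ when $k \ge 3$ requires exploiting the constraint $\sum_{i \in A} n_i \le n$ via AM-GM to bound $\prod_{i \in A} n_i$, and amortizing the per-node DP cost across the decomposition tree using the fact that the total size of modules at any single depth is at most $n$. The $O(n^3)$ floor then absorbs the cost of computing the decomposition, evaluating binomial coefficients, and running the trivial DPs at small-quotient indecomposable nodes.
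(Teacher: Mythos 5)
Your skeleton is essentially the paper's: compute the Gallai decomposition in polynomial time, take a product at series-type nodes, a multinomial times a product at parallel-type nodes, and handle the remaining counting by a dynamic program over order ideals whose size is controlled by the width bound \(k\). In fact your treatment of indecomposable-type nodes is \emph{more} explicit than the paper's: the factorization \(e(P_0) = |W_Q|\prod_i e(M_i)\), with \(|W_Q|\) counting quotient-compatible shuffles, is correct, and the paper's proof only states the series and parallel combining rules while running its width-\(k\) down-set DP on the bounded-width indecomposable pieces, leaving the prime-node-with-nontrivial-children combination implicit. So the combinatorial content of your proposal is sound.

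The genuine gap is in the complexity analysis---which is the actual content of the theorem---and your ``main obstacle'' misdiagnoses it. Your progress vectors \((c_1,\ldots,c_r)\) are exactly the down-sets of the poset \(\hat{Q}\) obtained from \(Q\) by blowing each module \(M_i\) up into a chain of \(n_i\) elements: the validity rule ``\(c_j>0\) forces \(c_i=n_i\) when \(M_i \prec_Q M_j\)'' is precisely the down-set condition in \(\hat{Q}\). Since \(\hat{Q}\) has width equal to that of \(Q\) (at most \(k\)) and at most \(n\) elements, the paper's Dilworth-chain bookkeeping---a down-set is determined by its \(k\) intersection sizes with a chain decomposition---bounds your state space by \((n+1)^k\) outright; there is no \(O(r^k \cdot n^k)\) overcount and no need for AM--GM, and the \(O(n^3)\) floor comes from computing the Dilworth decomposition, not from small DPs. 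What genuinely needs an argument, and what you only gesture at, is the amortization across the tree: blown-up quotients at nested prime nodes share elements, so neither \(\sum_{P_0}|P_0|^{\max(3,k)} \leq n^{\max(3,k)}\) nor your per-depth observation suffices (a chain of prime-type nodes each shedding \(O(1)\) elements makes the crude per-node bound sum to \(\Theta(n^{k+1})\), and ``at most \(n\) per depth'' times depth up to \(n\) gives the same loss). The paper sidesteps this entirely by running the expensive DP only on the pairwise-disjoint bounded-width modules of sizes \(m_1,\ldots,m_t\), where \(\sum_i m_i \leq n\) gives \(\sum_i m_i^{\max(3,k)} \leq \bigl(\sum_i m_i\bigr)^{\max(3,k)}\). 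To salvage your variant at the stated exponent you would need a finer per-node bound (for instance, at most \((1+s(P_0))^{k-1}(1+|P_0|)\) down-sets of \(\hat{Q}\), where \(s(P_0)\) is \(|P_0|\) minus its largest child, combined with a heavy-path argument); as written, your runtime claim is unproven even though your algorithm is correct.
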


\begin{theorem} \label{thm:Pisdim1} If \(\dim(P) = 1\), then counting the number of injective occurrences of \(P\) in an arbitrary poset \(Q\) is \(\#\bP\)-hard.
\end{theorem}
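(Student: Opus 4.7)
Since $P$ is a chain $c_1 \prec \cdots \prec c_k$, an injective, order-preserving map $f: P \to Q$ is uniquely determined by its image $f(c_1) \prec_Q \cdots \prec_Q f(c_k)$, so the counting problem is equivalent to counting $k$-element chains in an arbitrary poset $Q$, with both $k$ and $Q$ given as input. Equivalently, since chains in $Q$ correspond bijectively to cliques in the comparability graph $G(Q)$, the problem amounts to counting $k$-cliques in comparability graphs.

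My plan is to reduce from the $\#\bP$-hard problem of counting linear extensions of a poset (the Brightwell--Winkler theorem already invoked in Theorem~\ref{thm:BWisjustdimQis1}). Given a poset $R$ on $n$ elements whose linear extensions we wish to count, I would construct in polynomial time a poset $Q_R$ whose $k$-chain counts (for varying $k$) encode $\#\mathrm{LE}(R)$. The candidate construction begins with a ground set of the form $R \times [n]$ with a product-like relation designed so that chains using at most one element per layer project to chains in $R$, together with extra gadgetry intended to force ``injectivity'' at the $R$-coordinate; given chain counts for several $k$, a polynomial-interpolation argument is then used to recover $\#\mathrm{LE}(R)$. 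An alternative approach is to reduce from $\#k$-clique in a general graph $G$ by embedding $G$ as the comparability structure of a suitable blow-up of a long chain.

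The principal obstacle is controlling transitivity while enforcing the desired combinatorial constraint. Naive layered posets---for instance, taking $(r,i) \prec (r',j)$ iff $i<j$ and $r \prec_R r'$---are transitive, but their chain counts factor as (binomial in $n$)$\,\cdot\,$(chain counts of $R$), all polynomial-time computable. Attempting to add off-layer relations that encode the per-element injectivity required by a linear extension immediately breaks transitivity, and taking transitive closure introduces spurious ``monotone walk'' chains whose counts are tractable via matrix exponentiation on the layer graph, washing out the hardness. A successful reduction therefore needs either a gadget that simultaneously preserves transitivity and suppresses these spurious chains, or else a wholly different encoding---perhaps reducing instead from counting $k$-antichains in a bipartite poset (i.e., counting bipartite independent sets, which is $\#\bP$-hard), via a construction that exchanges antichains in one poset for chains in a dual.
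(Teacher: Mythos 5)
Your opening reformulation is correct and matches the (implicit) first step of the paper's argument: since a chain has a unique linear order and trivial automorphism group, injective occurrences of a $k$-element chain $P$ in $Q$ are in bijection with $k$-element chains of $Q$. The ``wholly different encoding'' you were hunting for at the end is exactly what the paper uses: given a poset $R$, it takes $Q$ to be the lattice $J(R)$ of down-sets of $R$ and $P$ a chain of length $|R|$ (read: $|R|+1$ elements). A chain of $|R|+1$ down-sets $\emptyset = D_0 \subsetneq D_1 \subsetneq \cdots \subsetneq D_{|R|} = R$ must gain exactly one element of $R$ at each step, so such chains are precisely the maximal chains of $J(R)$, which are in bijection with the linear extensions of $R$; hardness is then cited from Brightwell--Winkler. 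So the paper trades linear extensions of $R$ for maximal chains of its distributive lattice of down-sets, with no layering, blow-up, or interpolation at all.

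However, the principal obstacle you identified is not an artifact of your particular layered constructions --- it is universal, and you should have pushed it one step further, because it shows that no gadget of the kind you were seeking can exist. The strict order relation of \emph{any} poset $Q$ is a transitively closed acyclic digraph, so every directed walk $y_1 \to y_2 \to \cdots \to y_k$ has $y_1 \prec_Q y_2 \prec_Q \cdots \prec_Q y_k$: its vertices are automatically distinct and form a chain, and conversely each $k$-chain supports exactly one such walk (the increasing one). Hence the number of $k$-chains of $Q$ is $\mathbf{1}^{T} A^{k-1} \mathbf{1}$, where $A$ is the adjacency matrix of $\prec_Q$, computable in polynomial time for all $k$ at once. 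Your ``matrix exponentiation washes out the hardness'' remark is therefore not a flaw in your reduction but a proof that the counting problem in the statement lies in $\FP$, so it cannot be $\#\bP$-hard unless $\FP = \#\bP$. The same observation exposes the defect in the paper's own proof: the map $R \mapsto J(R)$ is not a polynomial-time reduction, since $|J(R)|$ can be $2^{|R|}$ (e.g., $R$ an antichain), and whenever $|J(R)|$ \emph{is} polynomially bounded, the down-set recursion $f(D) = \sum_{D' \lessdot_L D} f(D')$ from the proof of Theorem \ref{thm:betterthanSteiner} already computes the number of linear extensions of $R$ in polynomial time, so the hard Brightwell--Winkler instances necessarily blow up exponentially under this encoding. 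In short: your proposal does not prove the theorem, but the gap is unfillable --- the statement as written is false (assuming $\FP \neq \#\bP$), your tractability obstacle is the refutation, and the theorem would need to be reformulated (for instance with $Q$ presented succinctly rather than explicitly) before any reduction could succeed.
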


\section{Proofs}

One of the main reasons that order dimension gives rise to interesting questions about poset pattern occurrence is the fact that unlabeled, induced, injective pattern occurrence corresponds in a precise way to permutation pattern matching when both pattern and text have dimension \(2\).  In particular, suppose \(\dim(P) = \dim(Q) = 2\), and let \(P\) have a realizer consisting of \(([k],<)\) and \(([k],\prec_P^\ast)\) and \(Q\) has a realizer consisting of \(([n],<)\) and \(([n],\prec_Q^\ast)\).  One can think of \(P\) as representing the permutation \(\sigma_P\) of \(k\) such that \(\sigma_P(i) < \sigma_P(j)\) iff \(i \prec_P^\ast j\) and of \(Q\) as similarly representing a \(\sigma_Q\) of \(n\).  Then, we have the following rubric connecting permutation pattern matching to poset patterns occurrence in dimension 2.

\begin{prop} \label{prop:2to2isjustpermmatching} The matches of \(\sigma_P\) in \(\sigma_Q\) are in bijection with the unlabeled, induced, injective occurrences of \(P\) in \(Q\).
\end{prop}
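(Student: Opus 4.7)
The plan is to construct a pair of inverse maps $\Phi, \Psi$ between matches of $\sigma_P$ in $\sigma_Q$ and $\mathrm{Aut}(P)$-orbits of induced, injective occurrences of $P$ in $Q$. For $\Phi$: given a match $\cI = \{a_1 < \cdots < a_k\} \subset [n]$, define $f_\cI : P \to Q$ by $f_\cI(i) = a_i$, and let $\Phi(\cI)$ be the orbit $[f_\cI]$. Verifying that $f_\cI$ is an induced, injective occurrence is a routine calculation using the realizer identity $i \prec_P j \iff (i < j) \wedge (\sigma_P(i) < \sigma_P(j))$ (and its analogue for $Q$): monotonicity $i < j \iff a_i < a_j$ handles the first conjunct, while the match condition $\sigma_P(i) < \sigma_P(j) \iff \sigma_Q(a_i) < \sigma_Q(a_j)$ handles the second, together yielding the biconditional $i \prec_P j \iff f_\cI(i) \prec_Q f_\cI(j)$.

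For $\Psi$: given an induced injective occurrence $f : P \to Q$, set $\Psi([f]) = f(P) \subset [n]$; this depends only on the orbit since precomposing $f$ with any $\phi \in \mathrm{Aut}(P)$ preserves the image. To show $\Psi([f]) = \{b_1 < \cdots < b_k\}$ is a match, I would replace $f$ by the representative $f'$ in its orbit with $f'(i) = b_i$ and run the realizer computation in reverse: from $i \prec_P j \iff f'(i) \prec_Q f'(j)$ combined with $i < j \iff b_i < b_j$, one extracts the match condition $\sigma_P(i) < \sigma_P(j) \iff \sigma_Q(b_i) < \sigma_Q(b_j)$. The compositions $\Psi \circ \Phi$ and $\Phi \circ \Psi$ then reduce, respectively, to the tautology $f_\cI(P) = \cI$, and to the observation that $f$ and $f_{f(P)}$ are both induced isomorphisms onto $Q|_{f(P)}$ and so differ by an automorphism of $P$, hence lie in the same orbit.

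The main obstacle I anticipate is justifying the passage to the $<$-monotone representative $f'(i) = b_i$ inside each $\mathrm{Aut}(P)$-orbit: given an arbitrary induced injective $f$, one must produce $\phi \in \mathrm{Aut}(P)$ such that $f \circ \phi$ is monotone with respect to the first linear extension of the realizer of $P$. This is where the hypothesis $\dim(P) = \dim(Q) = 2$ is essential — one needs the rigid structure of two-dimensional posets to force the restriction of $\prec_Q^*$ to $f(P)$ to correspond, up to $\mathrm{Aut}(P)$, to $\prec_P^*$ under the iso; such a correspondence would fail in higher dimensions, where multiple inequivalent realizers of a poset make the second linear extension insufficiently determined by the partial order plus the first linear extension.
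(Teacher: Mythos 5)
Your map $\Phi$ and your same-image lemma coincide with the paper's proof: the paper constructs exactly the monotone map $f(i) = r_i$ from a match, and proves via $\tau = h^{-1} \circ g$ that two induced, injective occurrences with equal image differ by an automorphism of $P$. But the step you flag as your ``main obstacle'' is a genuine gap, and it cannot be closed the way you hope. The claim that every $\mathrm{Aut}(P)$-orbit contains a $<$-monotone representative --- equivalently, that the image of every induced injective occurrence of $P$ is a match of $\sigma_P$ --- is false in general. Take $\sigma_P = 2413$ and $\sigma_Q = 3142 = \sigma_P^{-1}$. Then $P$ has relations $1 \prec 2$, $1 \prec 4$, $3 \prec 4$ and $Q$ has relations $1 \prec 3$, $2 \prec 3$, $2 \prec 4$; both are the four-element fence, and $f(1)=2$, $f(2)=4$, $f(3)=1$, $f(4)=3$ is an induced bijective occurrence. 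Since $\mathrm{Aut}(P)$ is trivial, $f$ is alone in its orbit, it is not monotone, and the monotone candidate $f_{f(P)} = \id_{[4]}$ is not an occurrence (as $1 \prec_P 2$ but $1 \not\prec_Q 2$). So there is one unlabeled, induced, injective occurrence but zero matches of $2413$ in $3142$, and no $\phi \in \mathrm{Aut}(P)$ can repair monotonicity; your $\Psi$ is not well-defined into matches, and $\Phi$ is not surjective.

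The rigidity intuition in your last paragraph is inverted: in dimension $2$ the partial order determines the realizer at best up to \emph{swapping} its two linear orders (this is the uniqueness statement from \cite{BEK94} that the paper itself invokes in the proof of Proposition \ref{prop:countingauts}, and it holds only for indecomposable posets --- decomposable two-dimensional posets have even more minimal realizers, counted in \cite{IR06}). Swapping the two orders replaces $\sigma_P$ by $\sigma_P^{-1}$, which is in general an inequivalent pattern, and this is precisely what the counterexample exploits. It is worth knowing that the paper's own proof establishes exactly the two pieces you completed --- the forward construction and the fact that same-image occurrences form a single $\mathrm{Aut}(P)$-orbit --- and then asserts the bijection without addressing surjectivity, i.e., it is silent exactly where you stalled. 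You have correctly located the crux, but as written neither your argument nor the paper's closes it: a correct statement must either restrict to occurrences whose image realizes the pattern $\sigma_P$ itself, or identify patterns with their inverses (for indecomposable $P$), with a corresponding adjustment to the reduction in Theorem \ref{thm:2to2}.
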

\begin{proof} Note that the matches of \(\sigma_P\) in \(\sigma_Q\) correspond exactly with certain subsets of \([n]\) of size \(k\), namely, those \(\cI \in \binom{[n]}{k}\) so that \(\sigma_Q|_{\cI}\) is order-isomorphic to \(\sigma_P\).  Suppose \(\cI = \{r_1 < \cdots < r_k\}\), and define \(f : [k] \rightarrow [n]\) by \(f(i) = r_i\).  We claim that \(f\) provides an isomorphism between \(P\) and \(Q|_{f(\cI)}\), and thus is an induced, injective occurrence of \(P\) in \(Q\) on the set \(f(\cI)\).  Note that, for \(i, j \in [k]\), if \(i \prec_P j\), then \(i < j\) and \(\sigma_P(i) < \sigma_P(j)\), whence \(r_i < r_j\) and \(\sigma_Q(r_i) < \sigma_Q(r_j)\), so \(r_i = f(i) \prec_Q f(j) = r_j\); furthermore, this argument is reversible, so \(i \prec_P j\) iff \(f(i) \prec_Q f(j)\).

We now show that this map from matches of \(\sigma_P\) in \(\sigma_Q\) to occurrences of \(P\) in \(Q\) is unique up to automorphisms of \(P\).  Suppose that \(g\) and \(h\) are induced, injective occurrences of \(P\) in \(Q\) with \(g([k]) = h([k])\), so \(i \prec_P j\) iff \(g(i) \prec_Q g(j)\) iff \(h(i) \prec_Q h(j)\).  Define \(\tau = h^{-1} \circ g\).  We claim that \(\tau\) is an automorphism of \(P\).  Indeed, suppose \(i,j \in [k]\); we wish to show that \(\tau(i) \prec_P \tau(j)\) iff \(i \prec_P j\).  Indeed, \(\tau(i) \prec_P \tau(j)\) iff \(h^{-1}(g(i)) \prec_P h^{-1}(g(j))\) iff \(h(h^{-1}(g(i))) \prec_Q h(h^{-1}(g(j)))\) iff \(g(i) \prec_Q g(j)\) iff \(i \prec_P j\).  Therefore, matches from \(\sigma_P\) to \(\sigma_Q\) correspond bijectively to equivalence classes under automorphisms of \(P\) of induced, injective occurrences of \(P\) in \(Q\), i.e., unlabeled, induced, injective occurrences of \(P\) in \(Q\).
\end{proof}

It is easy to see that the not necessarily induced matches of \(\sigma_P\) in \(\sigma_Q\) are also in bijection with the not necessarily induced, unlabeled, injective occurrences of \(P\) in \(Q\).

\begin{proof}[Proof of Theorem \ref{thm:2to2}.] Note that, by Proposition \ref{prop:2to2isjustpermmatching}, the problem of computing the number of unlabeled, induced, injective occurrences of \(P\) in \(Q\) is polynomial-time reducible to the problem of counting matches of \(\sigma_P\) in \(\sigma_Q\).  Since the pattern and text here are arbitrary, by \cite{BBL98}, this is a \(\#\bP\)-hard computational problem.
\end{proof}

The next proof closely resembles in some aspects the argument for Theorem 5 of \cite{IR06}.

\begin{proof}[Proof of Proposition \ref{prop:countingauts}.] Let \(M_1,\ldots,M_k\) be the indecomposable strong modules of \(P\).  By Theorem 4.2 of \cite{BEK94}, each \(P[M_i]\) has at most two automorphisms.  In particular, if \((\prec_i^1,\prec_i^2)\) is a realizer of \(P[M_i]\) (unique up to ordering by \cite{G67}), and \(\tau\) is the permutation so that \(a \prec_i^1 b\) iff \(\tau_i(a) \prec_i^2 \tau_i(b)\), then \(P[M_i]\) has either no nontrivial automorphisms, or else \(\tau_i\) is its only one.  Note that it is certainly polynomial-time to check if \(\tau_i\) is indeed an automorphism; let \(t \leq k \leq |P|\) be the number of such \(i\), so that computing \(2^t\) is in \(\FP\).
It is now straightforward to describe all automorphisms of \(P\).  Since automorphisms preserve (strong) modules, all automorphisms of \(P\) arise as automorphisms of the indecomposable strong modules composed with automorphisms of the tree corresponding to the Gallai decomposition.  Furthermore, series-type nodes have only trivial automorphisms, while parallel-type nodes can be arbitrarily reordered, so the number of automorphisms has size
\[
2^t \prod_{P_0 \subset P} |\Gal(P_0)|!
\] 
where the \(P_0\) vary over all parallel-type strong modules of the Gallai decomposition.  By \cite{BM83}, it is possible to compute the entire Gallai decomposition in polynomial time; since
\[
\log \prod_{P_0 \subset P} |\Gal(P_0)|! < \sum_{P_0 \subset P} |\Gal(P_0)|^2 \leq (\sum_{P_0 \subset P} |\Gal(P_0)|)^2 \leq 4 |P|^2,
\]
this shows that computing the number of automorphisms of a dimension 2 poset is in \(\FP\).  (The total number of vertices of a rooted tree none of which have exactly one child is at most twice the number of leafs.)
\end{proof}

\begin{proof}[Proof of Theorem \ref{thm:lab_ind_inj_2-to-2}.]  By Proposition \ref{prop:countingauts} the problem of computing the number of labeled, induced, injective occurrences of one dimension \(2\) poset in another is polynomial-time reducible to the problem of computing the number of unlabeled, induced, injective occurrences of one dimension \(2\) poset in another.  This latter problem is \(\#\bP\)-hard by Theorem \ref{thm:2to2}.
\end{proof}

\begin{proof}[Proof of Corollary \ref{cor:losethedimension}]  This follows immediately from Theorems \ref{thm:2to2} and \ref{thm:lab_ind_inj_2-to-2}, since the problem without dimension constraints is more general.
\end{proof}


The next proof involves a modification of the argument of \cite{BBL98}, and in fact can be used to provide another proof of the \(\#\bP\)-hardness of permutation pattern matching because all of the matches involved are in fact induced.

\begin{proof}[Proof of Theorem \ref{thm:modifiedBBL}]  Since not necessarily induced, unlabeled, injective occurrences of a dimension \(2\) poset \(P\) in a dimension \(2\) poset \(Q\) are equivalent by Proposition \ref{prop:2to2isjustpermmatching} to not necessarily induced matches of \(\sigma_P\) in \(\sigma_Q\), we show that the latter problem is \(\#\bP\)-hard.  Suppose \(\Sigma\) is an instance of 3-SAT over \(n\) variables \(\{x_1,\ldots,x_n\}\), i.e.,
\[
\Sigma = C_1 \wedge \cdots \wedge C_m
\]
where \(C_i = (v^i_{1} \vee v^i_{2} \vee v^i_{3})\), each \(v^i_{j}\) being a literal of the form \(x_{a(i,j)}\) or \(\neg x_{a(i,j)}\), \(a(i,j) \in [n]\).  We assume that no variable occurs both positively and negatively in the same clause.  Define a pattern \(\pi\) and text \(\tau\) permutation as follows.  More correctly, for convenience of notation, we define two sequences of distinct reals which can be interpreted as permutations.  We treat sequences and words interchangeably, writing concatenation as \((\cdot)\)-product.  Then
\[
\pi = \pi^x_{1} \cdots \pi^x_{n} \cdot \pi^C_{1} \cdots \pi^C_{m}
\]
and
\[
\tau = \tau^x_{1} \cdots \tau^x_{n} \cdot \tau^C_{1} \cdots \tau^C_{m}.
\]
Define
\[
\pi^x_i = (2n+2i-1) \cdot i \cdot (2n-i+1) \cdot (2n+2i),
\]
and
\begin{align*}
\tau^x_i &= (4n+4i-1) \cdot (2i-1) \cdot (4n-2i+2) \cdot (4n+4i) \\
& \qquad \cdot (4n+4i-3) \cdot (2i) \cdot (4n-2i+1) \cdot (4n+4i-2).
\end{align*}
We need a few more definitions before describing the \(\pi_i^C\) and \(\tau_i^C\), \(1 \leq i \leq m\).  In particular, we describe \(\pi_i^C\) inductively, that is, once \(\pi_1^C\) through \(\pi_{i-1}^C\) have been described.  Let \(u_{ij}\) for each \(j \in [3]\) be any real number strictly between \(a(i,j)\) and \(2n - a(i,j) + 1\) so that \(u_{ij}\) is strictly larger than \(u_{i^\prime j}\) for each \(1 \leq i^\prime < i\).  Define
\[
\pi^C_i = (4n + 2i - 1) \cdot u_{i1} \cdot u_{i2} \cdot u_{i3} \cdot (4n + 2i).
\]
Now, we describe \(\tau_i^C\) inductively.  Let \(T_{j}\) for each \(j \in [n]\) be the open interval \((2j-1,4n-2j+2)\); let \(F_j\) be the open interval \((2j,4n-2j+1)\).  Now, for each \(j \in [n]\), let \(T_{ij} = T_j\) if \(x_j\) occurs positively in \(C_i\) and \(T_{ij} = F_j\) if \(x_j\) occurs negatively in \(C_i\); similarly, let \(F_{ij} = F_j\) or \(F_{ij} = T_j\) if \(x_j\) occurs positively or negatively in \(C_i\), respectively.  (We do not define \(T_{ij}\) or \(F_{ij}\) if \(x_j\) does not occur in \(C_i\).) For each \(x_j\) that appears in \(C_i\), choose \(t_{ijk} \in T_{ij}\) for each \(k \in [4]\) so that
\[
t_{ij1} < t_{ij2} < t_{ij3} < t_{ij4}
\]
and \(t_{ij1} > t_{i^\prime j 4}\) for each \(1 \leq i^\prime < i\).  Next, for each \(x_j\) that appears in \(C_i\), choose \(f_{ijk} \in T_{ij}\) for each \(k \in [3]\) so that
\[
f_{ij1} < f_{ij2} < f_{ij3}
\]
and \(f_{ij1} > f_{i^\prime j 3}\) for each \(1 \leq i^\prime < i\).  Finally,
\begin{align*}
\tau^C_i = & (8n + 14i - 1) \cdot q_0(i) \cdot (8n + 14i) \\
& (8n + 14i - 3) \cdot q_1(i) \cdot (8n + 14i - 2) \\
& (8n + 14i - 5) \cdot q_2(i) \cdot (8n + 14i - 4) \\
& (8n + 14i - 7) \cdot q_3(i) \cdot (8n + 14i - 6) \\
& (8n + 14i - 9) \cdot q_4(i) \cdot (8n + 14i - 8) \\
& (8n + 14i - 11) \cdot q_5(i) \cdot (8n + 14i - 10) \\
& (8n + 14i - 13) \cdot q_6(i) \cdot (8n + 14i - 12).
\end{align*}
where
\begin{align*}
q_i(0) = & t_{ia(i,1)1} \cdot t_{ia(i,2)1} \cdot t_{ia(i,3)1} \\
q_i(1) = & t_{ia(i,1)2} \cdot t_{ia(i,2)2} \cdot f_{ia(i,3)1} \\
q_i(2) = & t_{ia(i,1)3} \cdot f_{ia(i,2)1} \cdot t_{ia(i,3)2} \\
q_i(3) = & t_{ia(i,1)4} \cdot f_{ia(i,2)2} \cdot f_{ia(i,3)2} \\
q_i(4) = & f_{ia(i,1)1} \cdot t_{ia(i,2)3} \cdot t_{ia(i,3)3} \\
q_i(5) = & f_{ia(i,1)2} \cdot t_{ia(i,2)4} \cdot f_{ia(i,3)3} \\
q_i(6) = & f_{ia(i,1)3} \cdot f_{ia(i,2)3} \cdot t_{ia(i,3)4}.
\end{align*}
We claim that satisfying assignments of \(\Sigma\) are in bijection with matches of \(\pi\) in \(\tau\).

\begin{description}
\item[Claim 1.] Consider any not necessarily induced match of \(\pi\) into \(\tau\).  We claim that \(\pi^x_i\) matches into \(\tau^x_i\) for each \(i \in [n]\) and \(\pi^C_j\) matches into \(\tau^C_j\) for each \(j \in [m]\).  Note that the following is an increasing subsequence of \(\pi\) of length \(2n+2m\):
\[
\pi_0 = (2n+1) \cdot (2n+2) \cdots (4n+2m-1) \cdot (4n+2m).
\]
Let \(z_k\), for each \(k \in [2n+2m]\), be the index so that \(\pi(z_k) = \pi_0(k)\).  Suppose that \(\{\tau(z^\prime_k)\}_{k=1}^{2n+2m}\), with \(z^\prime_{k+1} > z^\prime_{k}\) for each \(k \in [2n+2m-1]\), is an increasing subsequence \(\tau_0\) of \(\tau\) that can occur as the image of \(\pi_0\) in some match of \(\pi\) into \(\tau\).  Then, for each \(k \in [2n+2m-1]\), we must have
\[
z^\prime_{k+1} - z^\prime_k \geq z_{k+1} - z_{k}.
\]
It is straightforward to see that every such \(\tau_0\) has the form
\[
\tau_0(r_1,\ldots,r_n;s_1,\ldots,s_m) = w^x_1 \cdots w^x_n \cdot w^C_1 \cdots w^C_m
\]
where \(w^x_i\) is a subsequence of \(\tau^x_i\) of the form \((4n+4i-1-2r_i)\cdot(4n+4i-2r_i)\) for some \(r_i \in \{0,1\}\) and \(w^C_i\) is a subsequence of \(\tau^C_i\) of the form \((8n+14i-1-2s_i)\cdot(8n-14i-2s_i)\) for some \(s_i \in \{0,1,2,3,4,5,6\}\).
\item[Claim 2.] Consider any not necessarily induced match of \(\pi\) into \(\tau\); we claim it has a very particular structure, described as follows.  By Claim 1, \(\pi_0\) matches precisely some \(\tau_0\).  First, \(i \cdot (2n-i+1)\) must match to \((2i-1+r_i) \cdot (4n+2i+2-r_i)\).  Then, \(u_{i1} \cdot u_{i2} \cdot u_{i3}\) must match to \(q_i(s_i)\).  These positions are forced because there are precisely two (respectively, three) elements of the sequence between the elements \(2n+2i-1\) and \(2n+2i\) for each \(i \in [n]\) in \(\pi_0\) and between the elements of \(w^x_i\) in \(\tau_0\) (respectively, \(4n+2i-1\) and \(4n+2i\) for each \(i \in [m]\) in \(\pi_0\) and between the elements of \(w^C_i\) in \(\tau_0\)).  Furthermore, it is straightforward to see that any such map from \(\pi\) to \(\tau\) is indeed an (induced!) match, and in fact, the \(s_i\)'s are determined by the \(r_i\)'s. 
\item[Claim 3.] We claim that matches of \(\pi\) into \(\tau\), as described above, are in bijection with satisfying assignments.  Given a match of \(\pi\) into \(\tau\), the corresponding assignment sets \(x_i\) equal to true if \(r_i = 0\) and false if \(r_i = 1\); the clause \(C_i\) is satisfied by the assignment because \(v_{ij}\) receives the value \(\top\) if the \(j\)-th binary digit of \(s_i\) is \(0\) and \(\bot\) otherwise, and \(s_i \in \{0,1,2,3,4,5,6\}\).  Finally, it is clear that every satisfying assignment arises from such a match by choosing the \(r_i\)'s to reflect the appropriate variable settings.
\end{description}

\end{proof}

Because the matches used in the previous proof may be considered not necessarily induced, and they correspond exactly to satisfying assignments of the 3-CNF formula involved, Corollary \ref{cor:NPhard} follows immediately.

\begin{proof}[Proof of Theorem \ref{thm:BWisjustdimQis1}] Note that the number of injective occurrences of \(P\) in \(Q\) is just the number of linear extensions of \(P\) times \(\binom{|Q|}{|P|}\), the latter quantity being computable in polynomial time, and the former being \(\#\bP\)-hard by the result \cite{BW91} of Brightwell-Winkler. The number of not necessarily injective occurrences of \(P\) in \(Q\) is the number of linear extensions of \(P\) times \(\binom{|Q|+|P|-1}{|P|}\), the latter quantity being computable in polynomial time, and the former being \(\#\bP\)-hard by the result \cite{BW91} of Brightwell-Winkler.
\end{proof}

Before proceeding, note that a ``down-set'' of a poset \(P\) is simply a set \(D\) of elements of \(P\) so that \(x \in D\), \(y \in P\), and \(y \prec_P x\) implies \(y \in D\); we write \(x \lessdot_P y\) if \(y\) covers \(x\) in \(P\), i.e., \(x \prec_P y\) and there is no \(z \in P\) so that \(x \prec_P z\) and \(z \prec_P y\).

\begin{proof}[Proof of Theorem \ref{thm:betterthanSteiner}.]  First, suppose \(P\) has width \(k\) (a constant) and cardinality \(n\).  Then, by Dilworth's Theorem, there is a chain decomposition \(\cC = \{C_1,\ldots,C_k\}\); as mentioned in \cite{K90}, there are well-known \(O(n^3)\) algorithms for computing the Dilworth decomposition.  Now, we construct the lattice \(L\) of down-sets of \(P\) from \(\cC\), keeping track of \(|D \cap C_i|\) for each \(i\) and the children \(D^\prime \lessdot_L D\) of \(D\) as we construct the down-sets \(D\).  Starting from the empty set (which is the minimal element of \(L\)), we iteratively build up all down-sets by considering the least unused element of each \(C_i\) one at a time.  That is, given some down-set \(D\), we test if \(D \cup \{x_i\}\) is also a down-set for each \(x_i\), the least element of \(C_i\) which does not appear in \(D\), by checking if \(x_i\) satisfies \(y_j \nprec x_i\) for each \(j \in [k]\), where \(y_j\) is the element (if it exists) of height \(|D \cap C_j|+1\) in \(C_j\); this requires at most \(k^2\) comparisons per down-set \(D\).  It is straightforward to update the \(|D \cap C_j|\) and child lists appropriately for each new down-set.  Since down-sets are uniquely determined by the quantities \(|D \cap C_i|\), \(i \in [k]\), there are at most \(n^k\) such \(D\).  Therefore, the algorithm so far has cost \(O(n^{\max(3,k)})\) time.

Next, we use \(L\) to compute the number of linear extensions of \(P\).  Let \(f(D)\), for a down-set \(D\) of \(P\), denote the number of linear extensions of \(D\); it is easy to see that
\[
f(D) = \sum_{D^\prime \lessdot_L D} f(D^\prime),
\]
a calculation that requires summing at most a constant (\(k\)) number of integers at each step.  Again, the number of down-sets is at most \(n^k\), so we obtain \(f(P)\), the desired quantity, in time \(O(n^{\max(3,k)})\).

By \cite{MS94}, it is possible to compute the Gallai decomposition of a poset in \(O(n^2)\) time.  If the indecomposable modules of \(P\) have cardinalities \(m_1,\ldots,m_t\), the time to compute the number of linear extensions of all of them is, by the above argument, at most
\[
O \left (\sum_{i=1}^t m_i^{\max(3,k)} \right ) \leq O \left ( \left (\sum_{i=1}^t m_i \right )^{\max(3,k)} \right ) = O(n^{\max(3,k)}).
\]
Once the number of linear extensions of all the indecomposable modules has been computed, we combine these numbers into the number of linear extensions of \(P\) by recursing on the nodes of the Gallai decomposition.  Indeed, if a node is ``series-type'', then the number of linear extensions of the corresponding module is simply the product of the number of linear extensions of its children; if a node is ``parallel-type'' with children of cardinalities \(m^\prime_1,\ldots,m^\prime_u\), then the number of linear extensions of the corresponding module is the product of the number of linear extensions of its children and the quantity
\[
\binom{m^\prime_1 + \cdots + m^\prime_u}{m^\prime_1,\ldots,m^\prime_u}.
\] 
It is easy to check that the numerical computations involved require at most \(O(n^{\max(3,k)})\) time, so one can compute the total number of linear extensions of \(P\) in this amount of time.
\end{proof}

\begin{proof}[Proof of Theorem \ref{thm:Pisdim1}] This also follows from the main result of \cite{BW91}.  If we let \(R\) be any poset, \(P\) a chain of length \(|R|\), and \(Q\) the lattice of down-sets of \(R\), then the number of injective occurrences of \(P\) in \(Q\) is precisely the number of linear extensions of \(R\), which is \(\#\bP\)-hard to compute.
\end{proof}

Note that counting the number of injective occurrences of a chain in an arbitrary text poset of bounded dimension is in \(\FP\), because the standard dynamic programming algorithm for counting increasing subsequences of permutation readily generalizes to arbitrary dimension and executes in polynomial time.

\section{Conclusion and Problems}

A plethora of questions remain open about poset pattern occurrence.  First, if \(\dim(P)=2\) and \(Q\) is a chain, is the problem of counting the number of (not necessarily induced, injective) occurrences of \(P\) in \(Q\) \(\#\bP\)-hard?  In the language of permutations, this is the computational problem of counting, for a given permutation \(\sigma\), how many permutations of the same length have all of the inversions of \(\sigma\) (and possibly others as well).  The problem is also equivalent to asking whether counting linear extensions of dimension \(2\) posets is hard, a question left open by \cite{BW91} because the posets whose number of linear extensions the authors compute grow in dimension without bound.  Indeed, their gadget \(Q_I(p)\) contains as an induced subposet a bipartite poset whose ``upper set'' is the family of clauses that occur in the 3-SAT instance \(I\) and whose ``lower set'' is the family of variables occurring in \(I\), with a edge between a clause and variable precisely when the clause contains a literal which is the positive or negative of the variable.  If we choose the clauses to be all possible disjunctions of two variables out of \(n\) (repeating the second to ensure three literals), the resulting subposet is exactly the subposet of the Boolean poset between the doubletons and singletons; Spencer showed (\cite{S71}), via a result of Dushnik, that this ``Boolean layer'' poset has dimension \(\Omega(\log \log n)\), and therefore in particular tends to infinity.

What are the complexity of poset pattern recognition problems for other parameter settings than those considered here?  What problems/results from the substantial literature on permutation pattern avoidance generalize in an interesting way to not necessarily induced occurrences?  Is it \(\#\bP\)-hard to compute the number of posets of cardinality \(n\) which avoid (i.e., do not contain any occurrence of) given patterns, especially, in the case when the dimension of pattern and text is 2?  Given the apparent difficulty of computing the number of pattern-avoiding permutations for various special patterns (1324 being a prominent example), it is natural to suspect that this problem is computationally hard in general.

\end{document}